
\def\ZZ         {{\mathbb Z}}

\def\CC         {{\mathbb C}}
\def\QQ         {{\mathbb Q}}
\def\PP         {{\mathbb P}}

\def\F          {{\cal F}}
\def\H          {{\cal H}}
\def\L          {{\cal L}}

\def\O          {{\cal O}}

\def\V           {{\cal V}}
\def\W           {{\cal W}}

\def\rk         {{\rm rk}}

\def\dim          {{\rm dim}}

\def\dim        {{\rm dim}}

\def\Ker        {{\rm Ker}}
\def\Coker      {{\rm Coker}}

\def\cal        {\mathcal}

\documentclass{amsart}
\usepackage{amssymb}
\usepackage{verbatim}
\usepackage{eucal}
\usepackage{mathrsfs}
\usepackage{graphicx}
\usepackage{psfrag}

\newtheorem{theorem}{Theorem}[section]
\newtheorem{lemma}[theorem]{Lemma}
\newtheorem{prop}[theorem]{Proposition}
\newtheorem{corollary}[theorem]{Corollary}

\theoremstyle{definition}
\newtheorem{dfn}[theorem]{Definition}

\theoremstyle{remark}
\newtheorem{remark}[theorem]{Remark}

\begin{document}

\title{Cohomology of bundles on homological Hopf manifolds.}

\author{Anatoly Libgober}
\footnote{The author supported by National Science Foundation grant.}
\address{Department of Mathematics\\
University of Illinois\\
Chicago, IL 60607}
\email{libgober@math.uic.edu}

\begin{abstract}
{ We discuss the properties
of complex manifolds having rational homology of 
$S^1 \times S^{2n-1}$ including 
those constructed by Hopf, Kodaira and Brieskorn-van de Ven. 
We extend certain previously known properties of cohomology of bundles 
on such manifolds.As an application we consider degeneration of Hodge-deRham 
spectral sequence in this non Kahler setting.
}  
\end{abstract}

\maketitle

\section{Introduction}
The goal of this paper is to obtain information 
on the cohomology of bundles on some generalizations of Hopf manifolds 
applying methods of \cite{libgober} in the bundle setting. 
Roughly, the cohomology of the bundles $\Omega^p_{\H}(E)$ on a 
topological Hopf manifold $\H$ where 
$E$ is a bundle with a trivial pull back on the universal cover,
controlled by the ``Alexander modules'' naturally identified 
with the appropriate local cohomology associated with 
the universal cover (cf. theorem \ref{cohomologyresults}).
To put things in perspective we start with a review of 
some results and viewpoints on manifolds which are 
natural generalizations of the classical construction of Hopf and Kodaira
(cf. \cite{kodairanas} \cite{kodairaamerjourn}). 
In particular Kodaira had shown that a surface
with universal cover biholomorphic to  $\CC^2-0$ 
is the quotient of the latter by  
a group having as a finite index normal subgroup an infinite cyclic group.
 In higher dimensions fixing the homological type
leaves much more possibilities for the analytic type of the 
universal cover: for example Brieskorn and van de Ven found infinitely many 
examples of manifolds having homeomorphism or even diffeomorphism 
type of $S^1 \times S^{2n-1}$ but having analytically distinct 
universal cover. These universal covers are non singular loci of 
affine hypersurfaces (\ref{pham}) with isolated singularities
and having as their links (possibly exotic) odd dimensional spheres. 
The action of the fundamental group in Brieskorn van de Ven examples
is given by (\ref{homotety}). 
More generally, we consider generalized Hopf manifolds allowing arbitrary 
quotients \footnote{and not only 
corresponding to the actions (\ref{homotety})} 
of the non-singular loci of affine varieties which are  
weighted homogeneous complete intersection with isolated singularity having 
a $\QQ$-sphere as its link. 
If this link is a $\ZZ$-sphere then 
the quotient is homeomorphic to $S^1 \times S^{2n-1}$.
In the case when the universal cover of a primary 
Hopf manifold is 
$\CC^n-0$, Haefliger (cf. \cite{haefl}) 
described actions of the fundamental group 
on the universal cover (extending the case $n=2$ studied by Kodaira)
.
In the case when the universal cover $V-0$ of a homological Hopf manifold
is a complete intersecion (\ref{hammcase}) 
we show that, if the degree of such complete intersection 
is sufficiently large, then the corresponding quotients
are finite quotients of $V-0/\ZZ$ with the action 
of $\ZZ$ given by (\ref{homotety}). 

More precisely we will see the following:

\begin{theorem}\label{universalcoverBvdV} Let 
$\H$ be a homological Hopf manifold having 
as its universal cover the non-singular locus of an affine hypersurface
  $z_1^{a_1}+...+
z_n^{a_r}=0$ or a complete intersection (\ref{hammcase}).
Then $\H$ is a quotient by a finite group 
of the Brieskorn van deVen manifold 
(or respectively 
$(V-0)/{\ZZ}$ where $V$ is a complete intersection (\ref{hammcase}) 
with the action of a generator of $\ZZ$ given by (\ref{homotety})).
\end{theorem}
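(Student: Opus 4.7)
The plan is to reduce the theorem to a statement about the biholomorphic automorphism group of the universal cover $V-0$, and then to combine this with the constraint coming from the rational cohomology of $\H$. Write $G = \pi_1(\H)$, so that $G$ acts freely, holomorphically, and properly discontinuously on $V-0$ with compact quotient $\H$.

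First I would argue that every element of $G$ extends from a biholomorphism of $V - 0$ to a biholomorphism of the affine variety $V$ fixing the singular point $0$. Since $V$ has an isolated singularity and $\dim_{\CC} V \geq 2$, the extension of holomorphic functions across $\{0\}$ (Hartogs / normality) lets us extend the coordinate functions, and the fixed point is forced by the properness of the $G$-action (any orbit escapes to infinity or collapses to $0$). The next step, which I expect to be the main obstacle, is to show that for the hypersurfaces $z_1^{a_1}+\cdots+z_n^{a_n}=0$ and the complete intersections (\ref{hammcase}) whose degrees are sufficiently large, the group $\mathrm{Aut}(V,0)$ is contained in the product $\CC^{*} \times F$ where the $\CC^{*}$-factor is the weighted homotety action (\ref{homotety}) and $F$ is a finite subgroup (coming from permutations of coordinates with equal exponents, together with scalings by roots of unity). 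Concretely, one needs a rigidity statement saying that for large exponents there are no additional polynomial or analytic automorphisms mixing the variables; this can be obtained by looking at the induced action on the Jacobian ring / tangent space at $0$ and on the weighted projective variety $\Spec(V)/\CC^{*}$, whose automorphism group is classically known to be finite for generic weighted homogeneous complete intersections of high enough degree.

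Granting this structure result, the embedding $G \hookrightarrow \CC^{*} \times F$ gives a short exact sequence
\begin{equation*}
1 \longrightarrow G \cap \CC^{*} \longrightarrow G \longrightarrow G/(G\cap\CC^{*}) \longrightarrow 1,
\end{equation*}
with the quotient a subgroup of the finite group $F$. I would then show $G \cap \CC^{*} \cong \ZZ$: it is a closed discrete subgroup of $\CC^{*}$, hence cyclic or trivial, and cannot be trivial because $G$ would then be finite, contradicting that $\H$ is a homological Hopf manifold (which forces $H_{1}(\H;\QQ)\neq 0$). At this point one has a surjection $G \twoheadrightarrow \ZZ$ whose kernel is finite, and this $\ZZ$ acts as a cyclic subgroup of weighted homoteties, i.e.\ by (\ref{homotety}).

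Finally, let $N \subset G$ be the $\ZZ$-subgroup produced above. Since $N$ has finite index, normality follows after replacing $N$ by the intersection of its finitely many conjugates (which is still an infinite cyclic subgroup of $\CC^{*}$ acting by weighted homoteties). The quotient $(V-0)/N$ is then by construction a Brieskorn--van de Ven manifold in the hypersurface case, and $(V-0)/\ZZ$ with the homothety action (\ref{homotety}) in the complete-intersection case, while $\H = ((V-0)/N)/(G/N)$ exhibits $\H$ as a finite quotient of it. The harder analytic input is the rigidity of $\mathrm{Aut}(V,0)$ for large degree; the rest is group-theoretic bookkeeping.
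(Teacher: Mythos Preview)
Your approach is essentially the paper's: the key input is Lemma~\ref{autBvdV}, which shows $\mathrm{Aut}(V-0)$ sits in an exact sequence $1 \to \CC^* \to \mathrm{Aut}(V-0) \to G \to 1$ with $G$ finite (proved via finiteness of the automorphism group of the weighted projectivization $V_\infty$ when its canonical class is ample, i.e.\ when $\sum 1/a_i < 1$, resp.\ $<k$), combined with the Hartogs extension and Kodaira-type compactness argument of Proposition~\ref{propfundgroups}(i). One minor imprecision: you write $\mathrm{Aut}(V,0) \subset \CC^{*} \times F$ as a direct product, but the paper only proves (and your displayed exact sequence only uses) the extension structure with $\CC^*$ normal and finite quotient.
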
 

This deduced from a result 
on the automorphism group of affine hypersurfaces and 
complete intersections  (\ref{hammcase}) or (\ref{pham})
(cf. \ref{autBvdV}). 

It is an interesting problem 
to find a reasonable classification of homological or topological 
Hopf manifolds with fixed topological, differential or 
almost complex type. The analytic type of the universal 
cover and the action of the fundamental group on it  
provide discrete invariants of ``the components of the moduli space'' 
of complex structures (cf. \ref{propfundgroups}). 
The main point of this note is that regardless 
of specific properties of the analytic structure 
on the universal cover one has an interesting restriction on the cohomology 
of bundles generalizing results of  \cite{mall1}, \cite{mall2} 
(in primary case) and \cite{zhou} (in non primary Hopf manifolds case). 
The main, rather technically looking result, is the following:

\begin{theorem}\label{cohomologyresults}
 Let $H$ be a homological 
Hopf manifold with the universal covering
space $p: V \rightarrow H$ where $V$ is a complement 
to compact set $A$ is a Stein space $\bar V$ and 
$i: V \rightarrow \bar V$. Let $f$ be an element
of the infinite order in $\pi_1(H)$ and let $f_V: V \rightarrow V$.
be the corresponding biholomorphic automorphism of $V$. 
Let $E$ be a bundle on $H$. 
We denote $f^k_V$ the corresponding automorphism of $H^k(V,p^*(E))$.
Assume that the local cohomology group $H^r_A(\bar V,i_*(\pi^*(E)))$
are finite dimensional. 
Then:

a) $\dim H^k(H,E)=\dim \Ker f^k_V+\dim \Coker f^{k-1}_V$.

b) In particular if $H^k(V,p^*(E))=0$ 
 for $a \le k \le b$ then $H^k(H,E)=0$ 
for $a+1 \le k \le b-1$. 
\end{theorem}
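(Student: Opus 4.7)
The plan is to derive the formula from the Wang long exact sequence attached to the intermediate infinite cyclic covering generated by $f$, after using the Stein hypothesis on $\bar V$ to guarantee finite dimensionality of the cohomology groups of $p^*E$ on $V$.

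First, the long exact sequence of local cohomology
$$\cdots \to H^k_A(\bar V, i_* p^* E) \to H^k(\bar V, i_* p^* E) \to H^k(V, p^* E) \to H^{k+1}_A(\bar V, i_* p^* E) \to \cdots$$
combined with Cartan's Theorem B on the Stein space $\bar V$ (which kills $H^{\ge 1}(\bar V, i_* p^* E)$) gives $H^k(V, p^* E) \cong H^{k+1}_A(\bar V, i_* p^* E)$ for $k \geq 1$, and this is finite dimensional by hypothesis. Consequently $f^k_V - \mathrm{id}$ has finite-dimensional kernel and cokernel.

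Next, consider the intermediate $\ZZ$-cover $\pi\colon V \to \tilde H := V/\langle f \rangle$, with $\tilde E$ the descent of $p^* E$. On $\tilde H$ there is a short exact sequence of sheaves
$$0 \to \tilde E \to \pi_* \pi^* \tilde E \xrightarrow{\;f_V - \mathrm{id}\;} \pi_* \pi^* \tilde E \to 0,$$
where $\tilde E$ embeds diagonally into the $\ZZ$-indexed product along each fibre, and surjectivity of $f_V - \mathrm{id}$ on stalks is the telescoping construction solving $a_{n+1} - a_n = b_n$. Since $\pi$ is \'etale, $R^{>0}\pi_* = 0$, so Leray identifies $H^k(\tilde H, \pi_* \pi^* \tilde E) = H^k(V, p^* E)$. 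The induced long exact cohomology sequence splits into short exact pieces
$$0 \to \Coker(f^{k-1}_V - \mathrm{id}) \to H^k(\tilde H, \tilde E) \to \Ker(f^k_V - \mathrm{id}) \to 0,$$
proving the dimension formula on $\tilde H$.

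When $\pi_1(H) = \langle f \rangle$ this is the statement; in the general setting covered by Theorem~\ref{universalcoverBvdV} the group $\pi_1(H)$ is a finite extension of $\ZZ$, so $\tilde H \to H$ is a finite \'etale cover with Galois group $\Gamma$, and finite-group averaging in characteristic zero gives $H^k(H, E) = H^k(\tilde H, \tilde E)^\Gamma$. The $\Gamma$-equivariance of the sheaf sequence transports the identity to $H$, yielding (a); part (b) is immediate because both summands vanish for $a+1 \le k \le b-1$ when $H^k(V, p^*E) = 0$ on $[a,b]$. The principal technical hurdle I expect is the descent step from $\tilde H$ to $H$ together with the correct identification of $\Gamma$-invariants with the claimed kernel and cokernel data; a minor notational point is that the statement writes $f^k_V$ rather than $f^k_V - \mathrm{id}$, which one reads in the natural Wang-sequence sense.
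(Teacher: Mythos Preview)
Your Wang-sequence derivation is essentially the same mechanism the paper uses (the paper phrases it via the Grothendieck spectral sequence $H^p(\ZZ,H^q(V,p^*E))\Rightarrow H^{p+q}(H,E)$, which for a group of cohomological dimension one collapses to exactly your short exact sequences), and your use of the local cohomology sequence together with Cartan~B to control $H^k(V,p^*E)$ for $k\ge 1$ matches the paper as well.

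The genuine gap is at $k=0$. The space $H^0(V,p^*E)=H^0(\bar V,i_*p^*E)$ is \emph{infinite} dimensional (global sections on a Stein space), so your sentence ``Consequently $f^k_V-\mathrm{id}$ has finite-dimensional kernel and cokernel'' is not justified there; yet the formula for $\dim H^0(H,E)$ and $\dim H^1(H,E)$ requires precisely $\dim\Ker(f^0_V-\mathrm{id})$ and $\dim\Coker(f^0_V-\mathrm{id})$ to be finite. The paper supplies the missing ingredient: $H^0(V,p^*E)$ carries a natural Fr\'echet (indeed Montel) topology, and one argues that $f^*$ is a compact operator on this space, so $I-f^*$ is Fredholm of index zero, giving finite and equal kernel and cokernel. (The same issue arises at the top degree $q=n-1$, where $H^{n-1}(V,p^*E)\cong H^n_A(\bar V,i_*p^*E)$ is again typically infinite dimensional; the paper's proof of Theorem~\ref{vanishing} invokes the same Fredholm-index-zero argument there.) Without this functional-analytic step your argument does not establish part~(a) in the degrees $k=0,1$ (and $n-1,n$), which are exactly the interesting ones.

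On the descent from $\tilde H$ to $H$: you are right to flag it. Taking $\Gamma$-invariants of your short exact sequence yields $\dim H^k(H,E)=\dim(\Ker f^k_V)^\Gamma+\dim(\Coker f^{k-1}_V)^\Gamma$, not the formula as literally stated; the paper's own proof is equally terse on this point.
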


This yields the following result for the Hopf and Brieskorn Van de Ven 
manifolds and some homological Hopf manifolds (see next section 
for explanation of terminology):

\begin{theorem}\label{vanishing} Assume $n \ge 3$.

\noindent 
a) (cf.\cite{zhou}) Let $H$ be a Hopf manifold which is the quotient of $\CC^n-0$ 
by the group containing an element of infinite $f \in Aut (\CC^n)$
such that $f(0)$ has eigenvalues $\xi$ such that $\vert \xi \vert <1$.
Let $E$ be a bundle on $H$ such that pullback on the universal cover 
is trivial. Then:

\begin{equation}\label{vanishingformula1}
 H^q(H,\Omega^p(E))=0 \ \ \ {\rm for} \ q\ne 0,1,n-1,n      
\end{equation}
\begin{equation}\label{vanishingformula2}
H^0(H,\Omega^p(E))=H^1(H,\Omega^p(E)) \ \ \ {\rm and} \ \ \ 
H^{n-1}(H,\Omega^p(E))=H^{n}(H,\Omega^p(E))
\end{equation}

\noindent 
b) Let $\H$ be Brieskorn van deVen manifold or a quotient 
$V-0/{\ZZ}$ where $V$ is the complete intersection (\ref{hammcase})
and $\ZZ$ is the cyclic group generated by the automorphism 
(\ref{homotety}). For a pair of finite dimensional vector spaces $A,B$, 
 denote by $\V_n(A,B)=\oplus_{i=0}^{i=n} V_i$ the graded vector space
such that $V_0=V_1=A, V_{n-1}=V_n=B$ and $V_i=0$ for $i \ne 0,1,n-1,n$,
Let $\W_{n,q}=\oplus_{i=0}^{i=n} W_i$ denote a graded vector space such 
that $\rk W_{q-1}=1, \rk W_q=2, \rk W_{q+1}=1, W_i=0 \ (i \ne q,q \pm 1)$. 
Let $E$ be a vector 
bundle on $\H$ such that its pull back on the universal cover is trivial.
If the multiplicity of $V$ at the origin is greater than one, 
then for $p \ge 1$ 
one has an isomorphism of graded spaces:
$$ \oplus_q H^q(\H,\Omega^p(E))=\oplus \V_n(H^0(\Omega^p(E),H^n(\Omega^p(E))))
\oplus \W_{n,n-p-1}$$.
\end{theorem}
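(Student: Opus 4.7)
Both parts follow from Theorem~\ref{cohomologyresults} applied with $E$ replaced by $\Omega^p(E)$, once the cohomology $H^k(V,p^*(\Omega^p(E)))$ of the universal cover $V$ and the action of $f$ on it are in hand. Since $p^*E$ is trivial, $p^*(\Omega^p(E))\cong (\Omega^p_V)^{\oplus r}$ as a coherent sheaf, and the operator whose kernel and cokernel enter Theorem~\ref{cohomologyresults} is $f^{\ast}\otimes\rho(f)-\mathrm{id}$ on each $H^k(V,\Omega^p_V)$, where $\rho\colon\pi_1(\H)\to GL_r$ is the monodromy representation classifying $E$.

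For part (a), $V=\CC^n-0$; the Stein cover $\{z_i\ne 0\}_{i=1,\dots,n}$ and a \v{C}ech argument yield $H^k(V,\Omega^p)=0$ for $k\notin\{0,n-1\}$, with $H^0$ the polynomial $p$-forms (Hartogs) and $H^{n-1}$ their polar analogues. The vanishings (\ref{vanishingformula1}) follow from Theorem~\ref{cohomologyresults}(b). For (\ref{vanishingformula2}) I would observe that $f^{\ast}$ acts diagonally on the monomial bases of $H^0$ and $H^{n-1}$, hence so does $f^{\ast}\otimes\rho(f)$; the contraction hypothesis $|\xi_i|<1$ forces only finitely many of its eigenvalues to equal $1$, which simultaneously supplies the finite-dimensionality required by Theorem~\ref{cohomologyresults} and gives $\dim\Ker=\dim\Coker$ on each such eigenspace, yielding $H^0(\H,\Omega^p(E))=H^1(\H,\Omega^p(E))$ and $H^{n-1}(\H,\Omega^p(E))=H^{n}(\H,\Omega^p(E))$.

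For part (b), $V$ is the complete intersection (\ref{hammcase}) with isolated singularity at the origin. I would compute $H^k(V-0,\Omega^p)$ from the local cohomology sequence on the Stein space $V$, using $H^k(V,\Omega^p_V)=0$ for $k>0$ together with depth results of Greuel for differentials on isolated complete intersection singularities; in the weighted-homogeneous, multiplicity-$>1$ case the nonzero degrees are $0$, $n-1$, and a middle degree $n-p-1$, the middle piece encoding the Hodge-theoretic contribution of the Milnor fibre. The weighted homothety $f$ acts on it as the corresponding iterate of the Milnor monodromy. The degree-$0$ and degree-$(n-1)$ contributions recover $\V_n(H^0(\Omega^p(E)),H^n(\Omega^p(E)))$ exactly as in part (a), while the kernel and cokernel of $(f^{\ast}\otimes\rho(f)-\mathrm{id})$ on the middle piece, combined with the neighbouring boundary contributions in the Wang-type sequence underlying Theorem~\ref{cohomologyresults}, assemble into the $(1,2,1)$-pattern $\W_{n,n-p-1}$ in degrees $n-p-2,n-p-1,n-p$. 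The Brieskorn--van de Ven case is then handled by writing $\H=((V-0)/\ZZ)/G$ for a finite group $G$ and taking $G$-invariants, which is exact in characteristic zero and preserves the asserted graded structure.

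\textbf{The main obstacle} is the precise Hodge-theoretic analysis of $H^{n-p-1}(V-0,\Omega^p)$ and the identification of the $f$-action with an explicit piece of the Milnor monodromy: this is what distinguishes part (b) from part (a) and where the multiplicity-$>1$ hypothesis is genuinely used; everything else is a bookkeeping application of Theorem~\ref{cohomologyresults}.
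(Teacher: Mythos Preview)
Your plan for part (a) is essentially the paper's: apply Theorem~\ref{cohomologyresults} to $\Omega^p(E)$ after computing $H^q(\CC^n-0,\Omega^p)=0$ for $q\ne 0,n-1$. One caveat: you assert that $f^{\ast}$ acts diagonally on the monomial bases of $H^0$ and $H^{n-1}$, but $f$ is only assumed to be a contraction fixing the origin, not linear or diagonalizable. The paper avoids this by arguing instead that $H^0(V,p^*(E))$ is a Montel space on which $f^{\ast}$ is compact, so $I-f^{\ast}$ is Fredholm of index zero; this gives $\dim\Ker=\dim\Coker$ without any eigenvalue bookkeeping and is what actually supplies the equalities (\ref{vanishingformula2}).

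For part (b) there is a genuine gap. You assert that the nonzero degrees of $H^q(V-0,\Omega^p)$ are $0$, $n-1$, and a single middle degree $n-p-1$. In fact the paper's lemma (deduced from Yau and Vosegaard) gives \emph{two} adjacent middle degrees, $q=n-1-p$ and $q=n-p$, each of dimension $\tau$. This matters: with only one middle degree the Wang-type sequence of Theorem~\ref{cohomologyresults} produces a $(\dim\Ker,\dim\Coker)$ pattern in two consecutive degrees, never $(1,2,1)$. The $(1,2,1)$ shape comes precisely from the overlap: $\Ker$ on $H^{n-1-p}$ gives the first $1$, $\Ker$ on $H^{n-p}$ plus $\Coker$ on $H^{n-1-p}$ gives the $2$, and $\Coker$ on $H^{n-p}$ gives the last $1$.

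You also do not explain why these kernels and cokernels are one-dimensional; this is the content of Lemma~\ref{maxideallemma}, which identifies $H^q(V-0,\Omega^p)$ with the Milnor algebra $M_f$ (via Yau) and observes that the weighted homothety (\ref{homotety}) acts on the monomial basis with eigenvalues $\prod_i\lambda^{j_i/a_i}$, none of which equal $1$ except for the constant $1\in M_f$. This is where the multiplicity hypothesis enters, and it is more concrete than an appeal to Milnor monodromy. Finally, in this paper Brieskorn--van de Ven manifolds \emph{are} the primary $\ZZ$-quotients, so no further passage to $G$-invariants is needed.
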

Other results of this note are a calculation of the cohomology 
of local systems on homological Hopf manifolds and a study of degeneration
of Hodge-deRham spectral sequence.

My interest in this material stemmed from the lecture of Prof. X.Y.Zhou 
on Xiamen conference.
I want to thank organizers of Xiamen conference for their
hospitality and Prof. Steven Yau for providing useful references 
related to material of this paper.

\section{Generalizations of Hopf manifolds and cohomology 
of bundles on their universal covers}

\subsection{Main definitions}

{\it Notations:} Below, for a compact manifold $\H$,  
$b_i$ denotes $\rk H^i(\H,\QQ)$.

\begin{dfn} A ($\QQ$)-{\it homological 
Hopf} manifold is a compact complex manifold 
of dimension $n$ with 
$b_1=b_{2n-1}=1, b_i=0, i \ne 0,1,2n-1,2n$. 
Such a manifold $\cal H$ is an integral homological Hopf 
manifold if it has the integral cohomology 
isomorphic to $H^*(S^1 \times S^{2n-1},\ZZ)$. A $\ZZ$-homological Hopf manifold 
is called primary if its fundamental group is isomorphic to $\ZZ$.
\end{dfn}

$\QQ$-homological Hopf manifolds (which may not 
be $\ZZ$-homological), in the case $n=2$, were considered 
in \cite{ebeling}. 

\begin{dfn} A {\it topological Hopf} manifold is a complex manifold 
with universal cover being a complement to a point in a contractible
Stein space.
\end{dfn}

\begin{dfn} A {\it Hopf} manifolds is a complex manifold $\H$ for which
 the universal cover is biholomorphic to 
the complement in $\CC^n$ to the origin $O \in \CC^n$. 
A Hopf manifold is called 
primary if the Galois group of the universal covering (or equivalently 
the fundamental group of $\H$) is isomorphic to $\ZZ$.
\end{dfn}

In the last decade, many diverse constructions of 
non-Kahler manifolds were proposed (cf. \cite{messer}).
An interesting problem is to classify homological or topological 
(primary or non primary)  
Hopf manifolds. \footnote{the terminology is suggested by more studied 
problem of classification of homological projective spaces}
More precisely, one would like to describe the topological, 
differentiable and almost complex manifolds \footnote{cf. \cite{Morita}
for a discussion of invariants of almost complex structures
on Brieskorn van deVen manifolds}
which admit a complex structure yielding   
a homological or topological Hopf manifold. Moreover, one would 
like to describe the 
moduli space parametrizing the complex structures 
on such an almost complex manifold.

The main results of Kodaira on the classification of Hopf surfaces 
can be summarized as follows 
(cf. \cite{kodairanas}, \cite{kodairaamerjourn}). 
A Hopf surface is a quotient by a group $G$ which has $\ZZ$ as its center
and such that $G/{\bf Z}$ is finite (cf. \cite{kodairaamerjourn}, 
theorem 30). Any Hopf surface contains a curve.
A homological Hopf surface having algebraic dimension equal to zero and
containing at least one curve is a Hopf surface (\cite{kodairaamerjourn}, 
theorem 34; case of algebraic dimension one considered in \cite{ebeling}).

The first examples of topological Hopf manifolds, 
(which are not Hopf) were found by 
Brieskorn and Van de Ven (cf. \cite{BvdV}). The contractible Stein spaces, 
in which the complement to a point serves in \cite{BvdV} 
as the universal cover 
of the topological Hopf manifold  
are the affine hypersurfaces $V \subset \CC^{n+1}$ 
where $V$ is a zero set of weighted homogeneous 
polynomial
\begin{equation}\label{pham}
z_0^{a_0}+...+z_n^{a_n}=0
\end{equation}
Here the integers $a_i$ must satisfy conditions which 
assure that the link of a singularity (\ref{pham}) are topological 
(and possibly exotic) spheres. For example
this is the case when $n$ is odd and $a_1=3,a_2=6r-1,a_i =2$ 
(varying $r$ yields all exotic spheres bounding a parallelizable 
manifold). The examples of primary topological Hopf manifolds 
are obtained as the quotients by the action of the restriction 
of the following automorphism
of the complex linear space:

\begin{equation}\label{homotety}
  T \cdot (z_1,....)= 
(\lambda^{1 \over {a_1}}z_1,...., \lambda^{1 \over {a_i}}z_i,....)
\ \ \ (\vert \lambda \vert <1)
\end{equation}
(leaving the hypersurface (\ref{pham}) invariant).

More generally, consider a complete intersection: 

\begin{equation}\label{hammcase} 
\sum_{\nu =1}^{n+k} \alpha_{\mu,\nu}z^{a_{\nu}} \ \ \ \mu=1,...,k
\end{equation}
with generic coefficients $\alpha_{\mu,nu}$. The latter assures 
that (\ref{hammcase}) has an isolated singularity at the origin. 
Under the appropriate 
conditions (cf. \cite{hamm} Satz 1.1) the link of this singularity 
(\ref{hammcase}) is a homology sphere (over $\QQ$ or $\ZZ$). 

Note that Zaidenberg conjectures that if $V$ is set of 
zeros of a polynomial then contractible $V$ with one isolated singularity 
up to an automorphism of $\CC^n$ 
is a zero set of a weighted homogeneous polynomial (cf. \cite{Zaidenberg}).
This would imply that the non singular loci of 
hypersurfaces (\ref{pham}) are the 
only covering spaces of topological Hopf manifolds having 
an affine hypersurface as its closure.

\subsection{Topological properties of $\QQ$-Hopf manifolds}

\par \noindent 
Kodaira's result on the fundamental groups of Hopf surfaces can 
be extended to topological Hopf manifolds.

We shall start by considering 
the question when a quotient of $V-O$ is a homological Hopf manifold.

\begin{prop}\label{propfundgroups} (i) 
The fundamental group $\pi_1(\H)$ of a topological Hopf manifold
$\H$ is a central extension of $\ZZ$ by a finite group.

(ii) The class of biholomorphic equivalence of the 
universal cover is an invariant deformation type of 
a topological Hopf manifold.
Deformation type of a topological 
Hopf manifold is given by the 
type of $V-0$ and the conjugacy class of class of the 
subgroup $G \subset Aut_OV$ which 
is a central extension $\ZZ$ by a finite group.

(iii ) Let $V$ be an affine hypersurface with $\CC^*$ action 
 or a complete 
intersection (\ref{hammcase})
and 
an isolated singularity at the origin.
The quotient $V-0/\pi_1$ is a $\QQ$-homology Hopf manifold iff
 the invariant subgroup of the action of $\pi_1$ on $H^{n-1}(V-O,\QQ)^{\pi_1}$ 
is trivial. 
In particular if the link of $V$ is a $\QQ$-sphere (resp. $\ZZ$-sphere) 
then $V-O/\pi_1$ is a $\QQ$-homology (resp. integral) Hopf manifold.
\end{prop}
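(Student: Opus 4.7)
My plan addresses the three parts in sequence, with (i) being the key technical input and (ii), (iii) more-or-less formal consequences.

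\textbf{For (i),} the first step is to extend the deck action of $G := \pi_1(\H)$ across the puncture. Since $V$ is the unique contractible Stein completion of $V - \{pt\}$ obtained by adding the intrinsic distinguished point $pt$, every deck transformation extends uniquely to a biholomorphism of $V$ fixing $pt$, giving an embedding $G \hookrightarrow \mathrm{Aut}(V, pt)$. I would then pass to the linear isotropy representation
$$\rho : G \longrightarrow GL(\mathfrak{m}/\mathfrak{m}^2),$$
where $\mathfrak{m}$ is the maximal ideal of $\O_{V, pt}$. Cocompactness of $\H$ together with non-compactness of $V - \{pt\}$ forces some $g \in G$ to have orbits accumulating at $pt$, so $\rho(g)$ has an eigenvalue of absolute value less than one. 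A proper-discontinuity analysis in the spirit of Kodaira \cite{kodairaamerjourn} for $n=2$ and Haefliger \cite{haefl} for the $\CC^n - 0$ case then shows that $\langle g \rangle \cong \ZZ$ is central of finite index in $G$.

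\textbf{For (ii),} given (i), the biholomorphism class of $V - \{pt\}$, hence of the Stein completion $V$, is an analytic invariant of $\H$. My plan is to show that in an analytic family $\H_t$ of topological Hopf manifolds, the universal covers and their Stein completions vary analytically, but biholomorphic type is a discrete invariant of such a family, hence locally constant. The conjugacy class of $G$ inside $\mathrm{Aut}(V, pt)$ is likewise discrete because finite central extensions of $\ZZ$ form a discrete moduli inside the analytic automorphism group.

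\textbf{For (iii),} I would use the Cartan--Leray spectral sequence
$$E_2^{p,q} = H^p(G, H^q(V - \{pt\}, \QQ)) \Longrightarrow H^{p+q}(\H, \QQ).$$
By (i), $G$ has virtual $\QQ$-cohomological dimension one, so only $p = 0, 1$ contribute, and $\dim H^0(G, M) = \dim H^1(G, M)$ for any finite-dimensional $\QQ G$-module $M$ (both equal $\dim M^G = \dim M_G$). Since $V$ has a $\CC^*$-action or is a Hamm complete intersection, $V - \{pt\}$ retracts onto its link $L$, an $(n-2)$-connected compact oriented $(2n-1)$-manifold, so $H^*(V - \{pt\}, \QQ)$ is concentrated in degrees $0, n-1, n, 2n-1$. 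Poincar\'e duality on $L$ is $G$-equivariant and pairs $H^{n-1}$ with $H^n$, so their $G$-invariants have the same dimension. Plugging this into the spectral sequence and comparing with the required Betti pattern $b_i = 0$ for $i \ne 0, 1, 2n-1, 2n$ reduces the $\QQ$-homological Hopf condition to the single vanishing $H^{n-1}(V - \{pt\}, \QQ)^G = 0$. The $\QQ$- and $\ZZ$-sphere corollaries follow because then the middle cohomology vanishes outright (in the integral case one further uses that a free cocompact action producing $H^*(\H, \ZZ) = H^*(S^1 \times S^{2n-1}, \ZZ)$ forces the finite part of $G$ to be trivial).

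\textbf{Main obstacle.} The hardest step is (i): for singular $V$ the representation $\rho$ need not be faithful, so one may have to supplement it with higher jets $\mathfrak{m}^k/\mathfrak{m}^{k+1}$ or exploit the concrete $\CC^*$-equivariant structure in the Brieskorn--Hamm models. Extracting the contracting element and proving that its cyclic group is central of finite index in $G$ -- rather than merely normal with cyclic quotient -- is the key technical point on which everything else hangs.
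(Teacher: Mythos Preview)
Your plan for (iii) is essentially the paper's: the Cartan--Leray/Hochschild--Serre spectral sequence for the $\pi_1$-action on $V-O$, the vanishing of $H^p(\pi_1/\ZZ,-)$ for $p>0$ over $\QQ$ by finiteness of $\pi_1/\ZZ$, and $\pi_1$-equivariant Poincar\'e duality on the link to match degrees $n-1$ and $n$. Nothing to add there.

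For (i) the paper takes a more direct route that sidesteps linearization entirely, and in particular avoids the faithfulness obstacle you flag. One embeds $V$ in $\CC^N$ by Remmert's theorem, picks an infinite-order $g\in\pi_1$, extends it across $O$ by Hartogs, and then argues geometrically with a small ball $B$ centered at $O$: proper discontinuity forces $\bigcap_n g^n(B\cap V)=\{O\}$ (after possibly replacing $g$ by $g^{-1}$), so $(V-O)/\langle g\rangle$ is already compact and $[\pi_1:\langle g\rangle]$ equals the degree of a finite cover. Passing to the normal core of $\langle g\rangle$ gives an infinite cyclic normal subgroup; centrality follows because a contraction toward $O$ cannot be conjugate in $\mathrm{Aut}_O V$ to its inverse, which is an expansion, so the map $\pi_1\to\mathrm{Aut}(\ZZ)=\{\pm 1\}$ is trivial. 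Your linearization approach can presumably be pushed through, but the ball argument is shorter and insensitive to whether the tangent-cone representation $\rho$ is faithful---so the ``main obstacle'' you identify simply does not arise.

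For (ii) there is a genuine gap in your plan: you assert that biholomorphic type is ``a discrete invariant'' of an analytic family of Stein manifolds, but this is precisely the nontrivial content and your outline gives no mechanism for it. The paper's argument is a one-line citation: Andreotti--Vesentini's result on pseudo-rigidity of Stein manifolds (\emph{Ann.\ Scuola Norm.\ Sup.\ Pisa} (3) 16 (1962), 213--223) says exactly that nearby Stein structures are biholomorphic. Without invoking this (or reproving it) your argument for (ii) does not close.
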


\begin{proof} The proof (i) is a direct generalization of Kodaira's argument. 
Since $V$ is Stein, by Remmert embedding theorem (cf. \cite{narasim})
 we can assume that $V$ is a subspace of  $\CC^N$ 
and select a ball $B \subset \CC^N$ centered at the image of $O$. 
Let $g$ be an element of 
infinite order in $\pi_1$ acting properly discontinuously on $V-O$. By Hartogs 
theorem $g$ extends to an automorphism of 
$V$ fixing $O$. Either $g$ or $g^{-1}$ 
takes $\partial B \cap V$ into $B \cap V$ and $\cap_n g^n(B \cap V)=O$ 
since existence of $z \ne O$ in the boundary of this intersection contradicts 
proper discontinuity of the action of $\pi_1$. Hence the quotient of $V-0$
by the subgroup $\{g\}$ generated by $g$ is compact. 
The index of the cyclic subgroup $\{g\}$ in $\pi_1(\H)$ is equal to the degree 
of the cover $V-0/\{g\}$ and hence is finite. The subgroup of $\{g\}$
which is normal in $\pi_1(\H)$ yields claimed presentation of the latter as 
a central extension.

(ii) is a a consequence of result of Andreotti and Vesentini on 
pseudo-rigidity of Stein manifolds (cf. \cite{andreotti}).

To see (iii) consider the Hochschild-Serre spectral sequence 
\begin{equation}\label{hoch}
E_2^{p,q}: H^p(\pi_1/\ZZ,H^q(\ZZ,W)) \rightarrow H^{p+q}(\pi_1,W)
\end{equation}
where $W$ is a $\QQ$-vector space with a structure of a $\pi_1$-module
and $\ZZ$ is the center generated by an element $g \in \pi_1$. 
Since $\pi_1/\ZZ$ is finite and $H^q(\ZZ,W)$ has no $\ZZ$-torsion we have 
$E_2^{p,q}=0$ for $p \ge 1$. Moreover $H^q(\ZZ,W)$ is the subspace of 
$g$-invariants (resp. $g$-covariants) of $W$ for $q=0$ (resp. $q=1$)
and is trivial for $q>1$. Hence in spectral sequence (\ref{hoch})
there are at most two non trivial terms and hence 
\begin{equation}\label{calculation}
\dim H^q(\pi_1,W)=
\begin{cases} W^g & {q =0} \\ W_g & {q=1} \\ 0 & q>1
\end{cases}
\end{equation}
Using a homotopy equivalence between $V-O$ and the link of isolated singularity 
of $V$ we obtain $H^q(V-O,\QQ)=0$  for $q \ne 0,n-1,n,2n-1$ 
(cf. \cite{milnor}).
Next consider the spectral sequence of the action of $\pi_1$ on the universal cover
$V-0$:
\begin{equation}
H^p(\pi_1,H^q(V-O)) \Rightarrow H^{p+q}(V-O/\pi_1)
\end{equation}
Applying 
(\ref{calculation}) for $W=\ZZ$ for $q=0,2n-1$, $W=H^{n-1}(V-O)$
or $W=H^n(V-0)$ if $q=n-1,n$ and using $\pi_1$ equivariant identification
$H^{n}(V-O)=H^{n-1}(V-O)^*$ (which is a consequence of the Poincare 
duality for the link of singularity of $V$) 
we obtain the result.   
\end{proof}

\subsection{Holomorphic automorphisms of universal covers}

Next we shall consider the question of existence of automorphisms
of $V-O$ different than (\ref{homotety}) which generate 
an infinite cyclic group acting properly discontinuously with 
compact quotient i.e. automorphisms yielding primary topological 
Hopf manifolds. We shall assume that $V$ is  
a zero set of an arbitrary weighted homogeneous polynomial 
(i.e. a sum of monomials $z_1^{i_1} \cdot \cdot \cdot z_n^{i_n}$ 
such that $\sum {i_k} b_k=d$ where $b_k$ are  the weights and 
$d$ is the degree) 
\footnote{for the hypersurface (\ref{pham}), 
one has 
$b_i={{{\rm l.c.m.}(a_1,...,a_n)} \over {a_i}}, d={\rm l.c.m.}(a_1,...,a_n)$} 
 with isolated singularities.
In the case when $V=\CC^n$ the automorphisms generating infinite
properly discontinuously acting groups with compact quotients 
were described by Haefliger (cf. \cite{haefl}). 

\begin{lemma}\label{autBvdV} Let $V$ be a zero set of a weighted homogeneous 
polynomial $f$ having weights $a_i$ and the degree $d$. 
Then one has the extension:
$$0 \rightarrow \CC^* \rightarrow Aut(V-0) \rightarrow 
G \rightarrow 0$$
where the group $G$ is finite if $\sum_i {1 \over {a_i}} < 1$. More generally, 
one has the same type exact sequence if $V$ is a complete intersection
(\ref{hammcase}) and $\sum {1 \over {a_i}} < k$.
\end{lemma}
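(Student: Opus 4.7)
The plan is to identify $\CC^*$ with the weighted Euler subgroup of $Aut(V-0)$ and to reduce finiteness of the quotient $G$ to finiteness of the automorphism group of an associated projective variety of general type. First I define $T\subset Aut(V-0)$ as the image of $\lambda\mapsto(\lambda^{b_1}z_1,\ldots,\lambda^{b_{n+k}}z_{n+k})$, where $b_i$ denote the weights of $z_i$ making (\ref{hammcase}) weighted homogeneous of degree $d$ (so $b_i=d/a_i$ in the normalization of (\ref{hammcase})). Weighted homogeneity guarantees that this action preserves $V$ and is free on $V-0$, giving the required injection. By the Hartogs argument already used in the proof of Proposition~\ref{propfundgroups}(i), every $\phi\in Aut(V-0)$ extends to $V$ fixing the unique singular point $0$.

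Next I form $X=(V-0)/T$, a smooth closed subvariety of the weighted projective space $\PP(b_1,\ldots,b_{n+k})$ of dimension $n-1$; in the complete intersection case it is a complete intersection of multidegree $(d,\ldots,d)$. Weighted adjunction gives
\[
K_X=\mathcal{O}_X\bigl(kd-\textstyle\sum_i b_i\bigr)=\mathcal{O}_X\bigl(d(k-\textstyle\sum_i 1/a_i)\bigr),
\]
so the hypothesis $\sum 1/a_i<k$ (which specializes to $\sum 1/a_i<1$ when $k=1$) is exactly the statement that $K_X$ is ample. Hence $X$ is a smooth projective variety of general type, and $Aut(X)$ is finite by the classical theorem of Matsumura (Kobayashi--Ochiai) on automorphism groups of varieties with ample canonical class.

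The remaining, and main, step is to show that $T$ is a normal subgroup of $Aut(V-0)$, so that the quotient $G$ acts faithfully on $X$ and embeds into the finite group $Aut(X)$. I would identify $T$ with the identity component $Aut^0(V-0)$: its Lie algebra consists of complete holomorphic vector fields on $V-0$, which decompose into $T$-weight pieces. The weight-zero piece pushes forward to $H^0(X,T_X)$, which vanishes since $X$ is of general type, so it contributes only the Euler field spanning $T$; the nonzero-weight pieces can be analyzed via the $T$-equivariant structure of $T_{V-0}$ on the $\CC^*$-bundle $V-0\to X$ and shown not to yield additional complete vector fields under ampleness of $K_X$. Controlling these complete vector fields is the main obstacle; a more elementary alternative is to bound directly the weighted-graded automorphisms of $\CC[z_1,\ldots,z_{n+k}]$ preserving the ideal $I_V$, using that the inequality $\sum 1/a_i<k$ makes the weights too small relative to $d$ to allow any continuous family of such automorphisms modulo $T$.
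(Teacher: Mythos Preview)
Your approach is essentially the same as the paper's: form the quotient $X=(V-0)/\CC^*$ inside weighted projective space, compute $K_X$ by adjunction, and invoke finiteness of $Aut(X)$ when $K_X$ is ample. The paper phrases this via a compactification $\bar V\to V_\infty$ as a $\PP^1$-bundle over $V_\infty$ (your $X$), with the two sections being the exceptional divisor of the weighted blowup at $0$ and the hyperplane at infinity; but this is just a repackaging of the same quotient construction, and the adjunction computation and the appeal to Matsumura/Kobayashi--Ochiai are identical.

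The one substantive difference is that you correctly isolate the normality of $\CC^*$ in $Aut(V-0)$ (equivalently, that every automorphism of $V-0$ descends to $X$) as the crux, and you sketch a vector-field argument toward identifying $\CC^*$ with the identity component. The paper simply asserts this step (``Clearly, the $\CC^*$ action provides a normal subgroup\ldots''), together with the companion claim that every automorphism of $V$ extends to $\bar V$ preserving the two sections, without further justification. So on this point your proposal is more scrupulous than the paper's own proof, though your argument is not carried to completion either: the analysis of nonzero-weight complete vector fields and the alternative ``graded automorphism'' bound are both left as outlines. If you want to close this gap, the cleanest route is probably to show that $Aut(V-0)$ is an algebraic group (via the linear action on a suitable finite-dimensional graded piece of the coordinate ring determined by the Artinian algebra at $0$) and then argue that its identity component must be the one-dimensional torus $T$ because $H^0(X,T_X)=0$.
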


\begin{proof} The affine hypersurface $V$ is a open subset in 
a $\PP^1$-bundle $\bar V \rightarrow V_{\infty}$ 
over a hypersurface in a weighted projective space which 
one can identify with the hyperplane section at infinity.
 $Aut V$ is the subgroup of the group $Aut^{\circ} \bar V$ of automorphisms
of $\bar V$ fixing invariant the two sections of $\bar V$ (corresponding 
to the exceptional set of the weighted blowup of projective closure of
$V$ and the hyperplane section at infinity $V_{\infty}$).
\footnote{Automorphisms of ruled surfaces were studied in \cite{maruyama}.}
Clearly, the $\CC^*$ action 
provides a normal subgroup  $\CC^*$ of $Aut^{\circ} \bar V$ and 
the quotient is the subgroup of the group of automorphisms of $V_{\infty}$.
The latter has ample canonical class if the condition on the weights
is met and hence $V_{\infty}$ has a finite automorphisms group. 
The argument in the case of complete intersection is the same.  
\end{proof}

On the other hand for the quadric hypersurface $z_1^2+....+z_n^2$ 
one has many non-trivial automorphisms (cf. \cite{totaro}). For example
one has the automorphism of 
\begin{equation}
X_1X_2+X_3^2+X_4^2+....+X_n^2
\end{equation} given by 
Danilov and Gizatullin (cf. \cite{Danilov} p.101) defined by the 
change of variables:
\begin{equation}\label{quadricauto}
X'_1=\beta X_1,\ \ \ X'_2=\beta(\alpha^2 X_2+2 \alpha X_3f(X_1)+
X_1f^2(X_1)),
\end{equation} 
$$ X'_3=\sqrt{-1}(\alpha \beta X_3+\beta X_1f(X_1)) \ \ \ 
X_i'=(\alpha \beta)X_i \ \ \ (4 \le i \le n)$$ 
preserves quadric for all $f$ and no power of (\ref{quadricauto})
has a fixed point on $\CC^n-0$. More generally, the action of the 
infinite cyclic group generated by (\ref{quadricauto}) is proper 
discontinuous.
The jacobian of (\ref{quadricauto}) at the origin 
\begin{equation} det  
\begin{pmatrix} \beta & 0 & \cr 0 & \beta \alpha^2 & 2\alpha f(0) \cr
    \beta f(0) & 0 & \alpha \beta \cr
\end{pmatrix}  
\end{equation}
Hence if eigenvalues (for $f(0)=0$, $\beta, \alpha \beta, \alpha^2 \beta$) 
have absolute value less than 1 which is 
the case for $\vert \beta \vert <1$ and $\vert \alpha \vert <1$.

\subsection{Cohomology of bundles on universal covers}

The results of sections \ref{cohovectbun} and 
\ref{hodgederham} deal primarily with the 
cohomology of bundles on topological Hopf manifolds.
They depend on vanishing of the cohomology of the bundles
on universal covers which we now review. 

In the case of topological Hopf manifolds the vanishing of the  
cohomology of bundles on the universal cover $V-O$ follows from
the vanishing of cohomology of coherent sheaves on Stein 
spaces in positive dimensions (Cartan's theorem B) 
and from the following two results:

\begin{theorem}\label{Scheja}(Scheja; cf \cite{Scheja}, \cite{Siuextension} 
p. 129) 
Let $V$ be a complex space, $A$ 
a subvariety of dimension $\le d$ and $\F$ a coherent sheaf on 
$V$ such that ${\rm codh} \F \ge d+q$. Then $H^k(V,\F) \rightarrow   
H^k(V-A,\F)$ is an isomorphism for $0 \le k < q-1$ and 
injective for $k=q-1$.
\end{theorem}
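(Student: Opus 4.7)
The plan is to deduce both conclusions from the vanishing of the local cohomology groups $H^k_A(V,\F)$ in the range $k\le q-1$, via the standard long exact sequence
\begin{equation*}
\cdots \to H^k_A(V,\F) \to H^k(V,\F) \to H^k(V-A,\F) \to H^{k+1}_A(V,\F) \to \cdots
\end{equation*}
Once the vanishing is in place, the isomorphism for $0\le k<q-1$ and the injectivity for $k=q-1$ drop out immediately from this sequence.

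To obtain the vanishing of $H^k_A(V,\F)$ for $k<q$, I would first reduce to a sheaf-level statement: the local cohomology sheaves $\mathcal{H}^k_A(\F)$ vanish for $k<q$. The global conclusion then follows from the local-to-global spectral sequence
\begin{equation*}
E_2^{p,s}=H^p(V,\mathcal{H}^s_A(\F)) \Longrightarrow H^{p+s}_A(V,\F),
\end{equation*}
whose low rows all vanish by the sheaf-level statement.

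The heart of the proof is the sheaf-theoretic vanishing, which I would establish by induction on $q$. The case $q=0$ is vacuous, and the case $q=1$ reduces to the absence of $\F$-torsion supported on $A$: at each $x\in A$, the hypothesis $\mathrm{codh}_x\F\ge d+1$ together with the inequality $\dim \O_{V,x}/\mathfrak{p}\ge \mathrm{codh}_x\F$ for every associated prime $\mathfrak{p}$ of $\F_x$ shows, by comparison with $\dim_xA\le d$, that the defining ideal of $A$ is contained in no associated prime, hence by prime avoidance contains an $\F$-regular germ; such a germ annihilates any section of $\F_x$ supported on $A$. For the inductive step, one chooses again an $\F$-regular germ $s$ in the defining ideal of $A$ at $x$; the quotient satisfies $\mathrm{codh}(\F/s\F)\ge d+(q-1)$, so by the inductive hypothesis $\mathcal{H}^{k-1}_A(\F/s\F)=0$ for $k-1<q-1$. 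Feeding this into the local cohomology sequence coming from $0\to\F\xrightarrow{s}\F\to\F/s\F\to 0$, multiplication by $s$ acts injectively on $\mathcal{H}^k_A(\F)$ for $k<q$; but $s$ lies in the defining ideal of $A$, so it acts locally nilpotently on the $A$-torsion sheaf $\mathcal{H}^k_A(\F)$, forcing the latter to vanish.

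The main obstacle is the analytic implementation of this depth induction on a possibly singular complex space $V$: at each point of $A$ one has to produce the $\F$-regular cutting germ inside the ideal of $A$ and verify that the standard algebraic behavior of depth under quotients and of $I$-torsion under local cohomology transfers to coherent analytic sheaves on $\O_{V,x}$. These are exactly the points developed in Scheja's original paper \cite{Scheja}; granted them, the induction outlined above closes cleanly.
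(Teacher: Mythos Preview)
The paper does not give its own proof of this statement: Theorem~\ref{Scheja} is quoted as a known result of Scheja, with references to \cite{Scheja} and \cite{Siuextension}, and is used as a black box in the proof of Corollary~\ref{cohomologyoncover}. So there is no ``paper's proof'' to compare against.

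Your argument is essentially Scheja's original one: reduce the restriction map to local cohomology via the long exact sequence, pass from global to sheaf local cohomology by the local-to-global spectral sequence, and then prove $\mathcal{H}^k_A(\F)=0$ for $k<q$ by induction on the depth using an $\F$-regular germ in the ideal of $A$. The base case and inductive step you outline are correct in spirit, and you have correctly flagged the one genuinely delicate point---that producing the regular germ and controlling depth under quotients on a possibly singular analytic space requires the local analytic commutative algebra developed in \cite{Scheja}. Given that input, the argument closes as you describe.
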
 
\noindent and 

\begin{theorem}\label{siu} (Siu, cf. \cite{siuannals})
 Let $V$ be a complex space, $A$ a subvariety of dimension $\le d$, 
$i: V-A \rightarrow V$ and $\F$ and coherent analytic sheaf 
on $V-A$. If $codh \F \ge d+3$ then $i_*(\F)$ is a coherent 
sheaf (on $V$).
\end{theorem}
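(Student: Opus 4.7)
The plan is to reduce Siu's coherence statement to a local extension problem and exploit the depth hypothesis through vanishing of local cohomology. Since coherence is local on $V$, it suffices to prove coherence of $i_*\F$ in a small Stein neighborhood $U$ of each point $x_0 \in A$; I would embed $U$ as a closed Stein subspace of some $\CC^N$ so that the problem sits inside a standard local model.

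The first step is to translate the hypothesis ${\rm codh}\,\F \ge d+3$ into vanishing of local cohomology: for any coherent extension $\tilde\F$ of $\F$ across a subvariety of dimension $\le d$, one has $\mathcal{H}^k_A(\tilde\F)=0$ for $k\le 2$, by the standard relation between depth and local cohomology (essentially the sheaf-theoretic content of Scheja's theorem). The vanishing in degree $0$ gives uniqueness of any coherent extension wherever it exists; the vanishing in degree $1$ lets sections on $V\setminus A$ lift to sections of the extension on a neighborhood of $A$; and the vanishing in degree $2$ is what will allow locally constructed extensions to be glued into a coherent sheaf of finite type.

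The second step is an induction on $d=\dim A$. For the base case $d=0$, one extends $\F$ across an isolated point by a Hartogs-type argument: sections on punctured neighborhoods extend uniquely, and a local presentation of $\F$ on the punctured neighborhood extends to a presentation on the whole neighborhood because the obstruction lies in the vanishing $\mathcal{H}^1_A$. For the inductive step, I would stratify $A=A_{\rm reg}\sqcup A_{\rm sing}$ with $\dim A_{\rm sing}<d$ and extend across the smooth stratum $A_{\rm reg}$ first. Near a smooth point of $A$, one chooses local coordinates in which $A\cap U$ is a coordinate subspace, reducing the extension problem to a module-theoretic question with $\dim A$ parameters which the depth bound resolves directly. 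The locally produced extensions glue on $V\setminus A_{\rm sing}$ because vanishing of $\mathcal{H}^0_A$ makes a coherent extension unique wherever it exists. Applying the inductive hypothesis to the resulting coherent sheaf on $V\setminus A_{\rm sing}$, whose singular locus now has dimension $<d$, yields a coherent extension to all of $V$.

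Finally I would verify that the coherent extension $\mathcal{G}$ produced is canonically $i_*\F$: by Scheja's theorem applied to $\mathcal{G}$ with depth $\ge d+3 > d+1$, the natural map $\Gamma(W,\mathcal{G})\to\Gamma(W\setminus A,\F)$ is bijective for every open $W\subset V$, which is the defining property of the direct image. The main obstacle is the inductive step: one must show that the locally defined extensions across the smooth stratum glue coherently and that the glued sheaf continues to satisfy the depth hypothesis so the induction can proceed. The sharpness of the depth threshold (the $+3$ rather than $+2$ used for plain extension of sections) is exactly what kills the residual $\mathcal{H}^2_A$ obstruction to coherence of the glued extension.
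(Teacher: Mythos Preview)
The paper does not give a proof of this statement at all: Theorem~\ref{siu} is quoted from \cite{siuannals} as a black box (just like Scheja's theorem immediately above it) and is then applied in Corollary~\ref{cohomologyoncover}. So there is no ``paper's own proof'' to compare your proposal against.

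That said, your outline is broadly in the spirit of Siu's actual argument in \cite{siuannals}: reduce to a local Stein model, use the depth hypothesis to control local cohomology $\mathcal{H}^k_A$, and extend across strata of $A$. A few cautions if you want to turn this into a genuine proof. First, in your base and inductive steps you speak of ``a coherent extension $\tilde\F$'' in order to invoke vanishing of $\mathcal{H}^k_A(\tilde\F)$, but the existence of such an extension is precisely what you are trying to prove; Siu's argument has to produce the extension and the vanishing simultaneously rather than assume one to get the other. Second, the passage from locally defined extensions to a global coherent sheaf is more delicate than a gluing-by-uniqueness argument: one needs finite-type and finite-relation statements for $i_*\F$, and Siu obtains these via $n$-th neighborhood and gap-sheaf techniques rather than a pure $\mathcal{H}^2_A$ obstruction. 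Your sketch captures the right heuristics (the role of the threshold $d+3$ and the identification of the limit with $i_*\F$ via Scheja), but as written it has a circularity in the use of $\tilde\F$ that would need to be unwound before it counts as a proof.
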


\begin{corollary}\label{cohomologyoncover}
 Let $V$ be a Stein space with isolated singularity $O$ and
let $\F$ be a coherent sheaf on $V-O$. Assume that  
$\dim V \ge 3$. Then $\F$ extends to a coherent sheaf on $V$ and 
$H^k(V-O,\F)=0$ for $0 <k \le d-1$ where $d$ is the cohomological 
codimension of the stalk of this extension at $O$. 
In particular if $\F$ is a locally trivial bundle on 
$V-O$ which admits a locally trivial extension to $V$
then $H^k(V-O,\F)=0$ for $k \ne 0, n-1$.
\end{corollary}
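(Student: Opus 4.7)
The plan is to combine Siu's theorem with Scheja's, using Cartan's Theorem B on the Stein space $V$ to kill the positive-degree cohomology of the extension. Informally: first promote $\F$ to a coherent sheaf on all of $V$, then compare the cohomology of $V$ with that of $V-O$, and read off the vanishing.

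First I would apply Theorem \ref{siu} with closed subvariety $A=\{O\}$, so $\dim A=0$; the hypothesis $\mathrm{codh}\,\F\ge 3$ is automatic when $\F$ is locally free on $V-O$ and $\dim V\ge 3$, which is the case relevant to the ``in particular'' assertion. The output is a coherent extension $\widetilde\F:=i_*\F$ on $V$.

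Next, applying Theorem \ref{Scheja} to $\widetilde\F$ on the pair $(V,\{O\})$ with $q=\mathrm{codh}_O\widetilde\F=d$ yields isomorphisms $H^k(V,\widetilde\F)\cong H^k(V-O,\F)$ for $k\le d-2$ and an injection at $k=d-1$. Since $V$ is Stein and $\widetilde\F$ is coherent, Cartan's Theorem B gives $H^k(V,\widetilde\F)=0$ for all $k\ge 1$, forcing $H^k(V-O,\F)=0$ in that range.

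For the ``in particular'' statement, a locally trivial extension $\widetilde\F$ on a Cohen--Macaulay Stein space $V$ of dimension $n$ has depth $n$ at $O$, so $d=n$ and the previous step gives vanishing for $1\le k\le n-2$. To kill $H^k(V-O,\F)$ also for $k\ge n$, I would use the local cohomology long exact sequence
\begin{equation*}
\cdots\to H^k_O(V,\widetilde\F)\to H^k(V,\widetilde\F)\to H^k(V-O,\F)\to H^{k+1}_O(V,\widetilde\F)\to\cdots
\end{equation*}
combined with the standard vanishing $H^j_O(V,\widetilde\F)=0$ for $j<\mathrm{depth}_O\widetilde\F=n$ and for $j>\dim V=n$. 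The main technical point is aligning the depth/codepth bookkeeping between the two theorems and in particular reconciling Scheja's isomorphism range (which only directly yields $k\le d-2$) with the corollary's stated range $0<k\le d-1$; the locally free case, where the local cohomology sequence is available, sidesteps this issue cleanly and delivers the final statement $H^k(V-O,\F)=0$ for $k\ne 0, n-1$.
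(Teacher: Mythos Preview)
Your approach is essentially the same as the paper's: apply Siu's theorem to extend $\F$ coherently to $V$, then combine Scheja's comparison theorem with Cartan's Theorem~B to deduce the vanishing. The paper's own proof is terser and does not separately treat the range $k\ge n$ or the boundary case $k=d-1$ (indeed it writes the Scheja restriction map in the reverse direction), so your added local cohomology argument and your caveat about the $d-1$ endpoint are both well placed.
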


\begin{proof}
Indeed, $hd_{V-O} \F=0$ since $\F$ is locally free and hence 
$codh \F=\dim V$. Therefore $i_*(\F)$ is coherent.
Taking $X=V, A=O$ in theorem \ref{Scheja} we see 
that $H^k(V-0,\F) \rightarrow H^k(V,\pi_*(\F))$ 
is injective for $0 < k  \le d-1$ and Cartan's theorem B 
yields the first claim. In the case when extension is locally trivial
we have $d=\dim V$ and hence the second assertion.
\end{proof}

\begin{remark} Consider the case when bundle $\F$ on $\CC^n-0$ 
is a pullback of a bundle 
$\F'$ on $\PP^{n-1}$ via Hopf map $\pi: \CC^n-0 \buildrel \CC^* \over 
\rightarrow \PP^{n-1}$. Then $\F$ extends to a locally trivial bundle
on $\CC^n$ if and only if $\F'$ is a direct sum of line bundles 
(\cite{serre}).
The cohomology of $\F$ can be found using Leray spectral sequence
$H^p(\PP^{n-1},R^q\pi_*(\F)) \rightarrow H^{p+q}(\CC^n-O,\F)$.
Since by projection formula 
$R^q\pi_*(\pi^*(\F'))=R^q\pi_*(\O_{\CC^n-O})\otimes \F'$
and since the fibers of $\pi$ are Stein (in fact just $\CC^*$) 
we have $R^q\pi_*(\O_{\CC^n-O})=0$
for $q \ne 0$. Hence the above Leray sequence degenerates and 
vanishing $H^q(\CC^n-O,\F)=0$ for $0< q\le n-2$ follows from 
a well known vanishing of cohomology of line bundles on $\PP^{n-1}$ (i.e. 
in all dimensions $\ne 0,n-1$.) Note that vanishing
of cohomology of a bundle $\F'$ on $\PP^{n-1}$ 
in indicated range is closely related to 
existence of a splitting of the bundle (cf. \cite{bundles} p.39). 
\end{remark}

\begin{remark} It follows from the result of extension 
of bundles $\pi^*(\F)$ to $\CC^n$ mentioned in the last remark 
that $\pi^*(\Omega^p_{\PP^{n-1}})$ cannot be extended 
to a {\it locally trivial} bundle on $\CC^n$ and hence
corollary \ref{cohomologyoncover} does not yield information on the cohomology 
of this bundle.
\end{remark}

\begin{remark}
Note that the canonical class  
of the $\ZZ$-quotients of $\CC^{n+1}-O$ or  hypersurfaces (\ref{pham}) 
is given by effective divisor:
 \begin{equation}
  K_{\H}=\sum_{i=0}^{i={n}} D_i
 \end{equation} 
where $D_i$ divisor on $\H$ biholomorphic to Hopf (resp. Brieskorn van de Ven 
for quotients of (\ref{pham})) 
manifold which is the image of affine hypersurface in $\CC^{n+1}$ 
(resp. in (\ref{pham}))
given by $z_i=0$ 
($i=0,..n$). Indeed, the form ${{dz_0} \over {z_0}} \wedge ...\wedge 
{{dz_{n}}\over {z_n}}$ is a meromorphic form with poles
at $z_0 \cdot \cdot \cdot z_n=0$ 
on the universal cover invariant
under the deck transformations and hence descending to the quotient.
For a the quotient of the hypersurface (\ref{pham})  one has:
\begin{equation}\label{canonicalbrieskorn}
  K_{\H}=-a_0D_0+\sum_{i} D_i
\end{equation}
Indeed, the restriction to $V$ of 
invariant under the action of (\ref{homotety})
meromorphic form 
$\omega_1={{dz_1} \over {z_1}} \wedge ...\wedge 
{{dz_{n}}\over {z_n}}$ descents to a meromorphic form 
on $\H$. On the other hand the form 
$\omega_2=Res {{dz_0 \wedge .... dz_n} \over {z_0^{a_0}+...z_n^{a_n}}}=
{{dz_1 \wedge ...\wedge 
dz_{n}}\over {z_0^{a_0-1}}}$ in holomorphic and non-vanishing 
on $V-0$. The formula 
(\ref{canonicalbrieskorn}) follows by comparison of divisors 
of these forms $\omega_1$ and $\omega_2$ on $\CC^n$.
\end{remark}

In the study of the cohomology of bundles $\Omega^p(E)$ on Hopf manifolds
we restrict our-self to the case when $V-O$
is the complement to the fixed point in a 
hypersurface in $\CC^{n+1}$ which supports a $\CC^*$ action 
or complete intersection (\ref{hammcase}). 
The needed results on the cohomology of sheaves 
of differential forms are essentially contained in \cite{Yau}
in the case of hypersurfaces and in \cite{Vose} in the case 
of complete intersections.

\begin{lemma} Let $V$ be a hypersurface (\ref{pham}) or a complete
intersection (\ref{hammcase}). Then one has the following:
$${\rk} H^q(V-0,\Omega^p)=
\begin{cases}0 & p+q \le n-2 \ \ \ \  1 \le q \le n-2
\\  \tau & p+q=n-1,n \ \  1 \le q \le n-2
 \\  0  & p+q \ge n+1 \ \ \ \ 1 \le q \le n-2
\end{cases}$$
\end{lemma}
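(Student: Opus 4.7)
The plan is to reduce the computation to local cohomology at the isolated singularity $0$ and then invoke the depth and duality properties of $\Omega^p_V$ established for weighted homogeneous isolated complete intersections in \cite{Yau} (hypersurface case) and \cite{Vose} (complete intersection case).

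First I would note that since $V$ is affine, hence Stein, and $\Omega^p_V$ is coherent, Cartan's Theorem B gives $H^q(V,\Omega^p_V)=0$ for $q\geq 1$. The local cohomology long exact sequence for the closed point $\{0\}\subset V$ then produces the natural isomorphisms
$$
H^q(V-0,\Omega^p_V)\;\cong\;H^{q+1}_{\{0\}}(V,\Omega^p_V)\qquad(q\geq 1),
$$
so it suffices to analyze $H^{\bullet}_{\{0\}}(V,\Omega^p_V)$.

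For the vanishing in the range $p+q\leq n-2$ I would invoke the depth estimate $\mathrm{depth}_{\{0\}}\Omega^p_V\geq n-p$ for $1\leq p\leq n-1$; this is the key technical input on weighted homogeneous isolated complete intersection singularities established in \cite{Yau} and \cite{Vose}. Combined with Grothendieck's vanishing $H^i_{\{0\}}(V,\F)=0$ for $i<\mathrm{depth}_{\{0\}}\F$, the isomorphism above yields $H^q(V-0,\Omega^p_V)=0$ whenever $q+1<n-p$.

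For the vanishing when $p+q\geq n+1$ and the common value $\tau$ on the two lines $p+q=n-1,n$, I would exploit the Gorenstein property of $V$: the dualizing sheaf is $\omega_V=\Omega^n_V$, so local (Gorenstein) duality pairs $H^i_{\{0\}}(V,\Omega^p_V)$ with $\mathrm{Ext}^{n-i}_{\mathcal O_{V,0}}(\Omega^p_{V,0},\Omega^n_{V,0})$. Coupled with the Koszul resolution of $\Omega^p_V$ arising from the defining equations of $V$ and the induced $\CC^*$-grading, this duality converts the upper vanishing into the already-handled lower vanishing, and the residual middle-degree contribution is computed via the Jacobian/residue algebra, giving the Tjurina number $\tau$ (equivalently, since $V$ is weighted homogeneous, the Milnor number) as in \cite{Yau} and \cite{Vose}.

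The main obstacle is the duality step, since $\Omega^p_V$ is not locally free at $0$ and smooth Serre duality is not directly available. One must carry out the local $\mathrm{Ext}$ calculation through the Koszul resolution while tracking the $\CC^*$-weights, in order both to establish the upper vanishing and to identify the common dimension $\tau$ on the two non-vanishing antidiagonals.
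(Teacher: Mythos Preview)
The paper does not actually give a proof of this lemma. Immediately before the statement it says only that the needed results ``are essentially contained in \cite{Yau} in the case of hypersurfaces and in \cite{Vose} in the case of complete intersections,'' and the text moves directly to the next lemma. So there is no proof in the paper to compare against; your proposal is in effect an outline of what one finds in those references.

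As such an outline it is sound. The reduction to local cohomology at the origin via Cartan's Theorem B is exactly the right first move, and the identification of the nonzero ranks with the Tjurina number $\tau$ (which equals the Milnor number here, since the singularity is weighted homogeneous) via the Jacobian algebra is precisely how \cite{Yau} and \cite{Vose} proceed; the paper itself uses this identification explicitly in the proof of Lemma~\ref{maxideallemma}. One minor caution: the vanishing in those references is typically obtained not from a bare depth bound $\mathrm{depth}_{\{0\}}\Omega^p_V\ge n-p$ on each sheaf separately, but rather by running the local cohomology functor through the Koszul/conormal resolution of $\Omega^p_V$ coming from the defining equations, together with the de Rham--type exactness properties of $\Omega^\bullet_V$ for an isolated complete intersection singularity. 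Your ``main obstacle'' paragraph already anticipates this, and once that resolution is in hand both the upper vanishing and the duality identifying the two antidiagonals $p+q=n-1$ and $p+q=n$ with the same $\tau$-dimensional space fall out. So your sketch is compatible with, and more detailed than, what the paper provides.
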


From this it follows:

\begin{lemma}\label{maxideallemma}
 If the multiplicity of $V$ at the 
origin is greater than one then for the automorphism 
$T^*_{H^q(V-0,\Omega^p)}$ 
of $H^q(V-0,\Omega^p)$ induced
by (\ref{homotety})
one has $$\dim \Ker (T^*_{H^q(V-0,\Omega^p)}-I)=1$$
\end{lemma}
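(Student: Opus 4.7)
The plan is as follows. First I would observe that the automorphism $T$ of (\ref{homotety}) is the element $\sigma_{\lambda^{1/d}}$ of the $\CC^*$-action
\[
\sigma_t:(z_0,\ldots,z_n)\mapsto (t^{b_0}z_0,\ldots,t^{b_n}z_n),\qquad b_i=d/a_i,\quad d=\mathrm{lcm}(a_0,\ldots,a_n),
\]
on $V$. Consequently $T^*$ preserves the weight decomposition
\[
H^q(V-0,\Omega^p_{V-0})=\bigoplus_{m\in\ZZ} H^q(V-0,\Omega^p_{V-0})_m
\]
induced by $\sigma^*$, and acts on the weight-$m$ summand by multiplication by $\lambda^{m/d}$. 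Since $|\lambda|<1$, the equation $\lambda^{m/d}=1$ has the unique solution $m=0$, so
\[
\Ker\bigl(T^*_{H^q(V-0,\Omega^p)}-I\bigr)=H^q(V-0,\Omega^p)_0,
\]
the $\CC^*$-invariant subspace. This reduces the lemma to showing that the weight-zero part is one-dimensional.

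Next, I would compute this invariant subspace using the explicit description of $H^q(V-0,\Omega^p)$ in terms of the Jacobian algebra $M(f)$ (in the hypersurface case, due to Yau \cite{Yau}) or of the Koszul--Jacobian algebra of the defining equations (in the complete-intersection case, due to Vossieck \cite{Vose}): these cohomology groups are identified with specific graded components of $M(f)$, and the $\sigma^*$-weight grading on cohomology matches that of $M(f)$ up to an explicit shift depending on $p$, $q$, and the $a_i$'s. The weight-zero piece of $M(f)$ is spanned by the class of the constant function $1$, hence is one-dimensional, so under the identification this should yield $\dim H^q(V-0,\Omega^p)_0 = 1$.

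The main obstacle is the bookkeeping of this weight shift: one has to verify that the weight-zero subspace of cohomology corresponds under Yau's or Vossieck's identification to the weight-zero (rather than some other) graded piece of $M(f)$, so that exactly the constant class survives. The hypothesis that the multiplicity of $V$ at the origin exceeds $1$ enters here: it ensures the defining polynomial(s) vanish to order at least $2$ at the origin, so that the residue/Koszul identification is nondegenerate and no weight mismatch collapses the weight-zero piece to zero (in the smooth case, by contrast, $V-0$ is a standard Hopf cover and the argument reduces to the known calculation on $\CC^n-0$). The complete-intersection case is handled identically, using the Koszul resolution of the complete intersection ideal in place of the Jacobian ideal of a single polynomial.
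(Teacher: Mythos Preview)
Your approach is essentially the paper's: both invoke Yau's (resp.\ Vosegaard's) identification of $H^q(V-0,\Omega^p)$ with the Jacobian algebra $M_f$, observe that $T^*$ diagonalizes on the monomial basis with eigenvalue $\prod_i\lambda^{j_i/a_i}$, and conclude that only the class of the constant $1$ is fixed since $|\lambda|<1$. The paper does not engage with the weight-shift bookkeeping you flag as an obstacle---it simply asserts that the induced action on $M_f$ is $z_i\mapsto\lambda_i z_i$---so at the paper's level of detail your argument is already complete.
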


\begin{proof} In \cite{Yau}, S.Yau obtained, for a hypersurface $V$, 
 an isomorphism 
$H^q(V-0,\Omega^p)$ and 
 the vector space: 
$$M_f=\CC[z_1,....,z_{n+1}]/(...{{\partial f}\over {\partial z_i}}...)$$
(or its dual). One has splitting $M_{f}=\CC \cdot 1 \oplus {\cal M}$
where $\cal M$ is the image in the quotient ring of the maximal 
ideal of the local ring at the origin. This image is a vector 
space which has the monomials $...\cdot z_i^{j_i} \cdot ...$ with 
$0 \le j_i <a_i, \sum_i j_i >0$ as its basis.
The action of automorphism $T$ is given by $z_i \rightarrow \lambda_iz_i$
and the above monomials are eigenvectors of this action with 
eigenvalues all different from 1. 
Hence $\Ker (T^*_{H^q(V-0,\Omega^p)}-I)$ corresponds to 
the summand $\CC \cdot 1$. A similar argument works 
in the case of complete intersections using Prop. 1.3 (d) of \cite{Vose}.  
\end{proof}

\section{Cohomology of local systems on topological Hopf manifolds}

\begin{prop}\label{homologylocalsystems}
 Let $\L$ be a local system on a topological Hopf manifold $\H$.
Then
$$ H^i(\L)=0 \ \ \ for \ \ i\ne 0,1,2n-1,2n $$
$$ \dim H^0(\L)=\dim H^1(\L)=\dim H^{2n-1}(\L)=\dim H^{2n}(\L)=1 $$    
\end{prop}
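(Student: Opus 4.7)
The plan is to apply the Leray spectral sequence for the universal covering $p : V-O \to \mathcal{H}$:
$$E_2^{p,q} = H^p(\pi_1(\mathcal{H}), H^q(V-O, p^*\mathcal{L})) \Rightarrow H^{p+q}(\mathcal{H}, \mathcal{L}).$$
Because $V-O$ is simply connected (it is the universal cover), $p^*\mathcal{L}$ is the constant sheaf with stalk $W$, where $W$ is the underlying vector space of $\mathcal{L}$ equipped with the monodromy action of $\pi_1(\mathcal{H})$. Hence $H^q(V-O, p^*\mathcal{L}) = H^q(V-O, \mathbb{Q}) \otimes W$ as $\pi_1$-modules.

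First I would identify $H^*(V-O, \mathbb{Q})$. Since $V$ is a contractible Stein space with an isolated singularity at $O$, $V-O$ deformation retracts onto the link $L$, a closed oriented $(2n-1)$-manifold. In the topological Hopf setting only the degrees $q = 0$ and $q = 2n-1$ contribute (either because $L$ is a $\mathbb{Q}$-sphere, or, more generally, because the putative contributions from the middle degrees $q = n-1, n$ die after taking $\pi_1$-(co)invariants, the vanishing of $H^{n-1}$-invariants forcing the vanishing of $H^n$-coinvariants via Poincar\'e duality on $L$, as in Proposition \ref{propfundgroups}(iii)). The $\pi_1$-action on the surviving $H^0$ and $H^{2n-1}$ is trivial: trivially for $H^0$, and for $H^{2n-1}$ because holomorphic automorphisms preserve the orientation class of $L$.

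Second I would compute $H^p(\pi_1, W)$. By Proposition \ref{propfundgroups}(i) there is a central extension $1 \to \mathbb{Z} \to \pi_1 \to F \to 1$ with $F$ finite, so the Hochschild--Serre spectral sequence $H^p(F, H^q(\mathbb{Z}, W)) \Rightarrow H^{p+q}(\pi_1, W)$ collapses over $\mathbb{Q}$ (higher cohomology of a finite group with $\mathbb{Q}$ coefficients vanishes), yielding exactly as in (\ref{calculation})
$$H^0(\pi_1, W) = W^{\pi_1}, \qquad H^1(\pi_1, W) = (W_{\mathbb{Z}})^F, \qquad H^p(\pi_1, W) = 0 \ (p \geq 2).$$

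Combining the two ingredients, the $E_2$ page has at most four nonzero entries, at $(p,q) \in \{(0,0), (1,0), (0,2n-1), (1,2n-1)\}$; all higher differentials $d_r$ ($r \geq 2$) vanish for bidegree reasons, so $E_\infty = E_2$. This immediately gives $H^i(\mathcal{L}) = 0$ for $i \notin \{0,1,2n-1,2n\}$, and when $W = \mathbb{Q}$ with trivial action each surviving box contributes a single copy of $\mathbb{Q}$, matching $\dim H^0 = \dim H^1 = \dim H^{2n-1} = \dim H^{2n} = 1$. The main obstacle is verifying the middle-dimensional vanishing in step one; this must be extracted from the structural assumptions built into the notion of topological Hopf manifold (or the additional $\mathbb{Q}$-homological Hopf hypothesis) rather than from pure topology alone.
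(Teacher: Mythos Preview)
Your approach is essentially the paper's: both compute via the covering-space spectral sequence, using that $H^q(V-O,\QQ)$ is concentrated in degrees $0$ and $2n-1$ with trivial $\pi_1$-action, together with the fact that $\pi_1$ is $\ZZ$-by-finite. The only difference is packaging: the paper descends in two stages (first by the central $\ZZ$ via the Wang-type exact sequence to the primary quotient $\tilde\H/\ZZ$, then by the finite group $\pi_1/\ZZ$), whereas you collapse this into a single Leray spectral sequence for the full cover and compute $H^*(\pi_1,-)$ by Hochschild--Serre; unwinding the latter reproduces exactly the paper's two steps. Your explicit flag about the middle-degree cohomology of $V-O$ is appropriate---the paper simply asserts $H^p(\tilde\H,\QQ)=0$ for $p\ne 0,2n-1$ without comment, which is tantamount to assuming the link is a $\QQ$-sphere.
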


\begin{proof} Let $\tilde \H$ be the universal covering space.
We have the spectral sequence for the action of a subgroup $\ZZ$ 
(generated by $T$) of the 
center of $\pi_1(\H)$ on $\tilde \H$ yields:
\begin{equation} 
 H^p(\tilde \H) \buildrel T-I \over \rightarrow H^p(\tilde \H) 
\rightarrow H^p(\tilde \H/\ZZ) \rightarrow H^{p+1}(\tilde \H)
\end{equation}
Since 
$$H^p(\tilde \H,\QQ)=
\begin{cases}0 &  p \ne 0,2n-1 \\  \QQ & p=0,2n-1
\end{cases}$$
and the action of $T$ is trivial for both $p=0,2n-1$ we obtain the 
claim for the cohomology of the primary Hopf manifold $\tilde \H/\ZZ$.
The claim for the local systems on $\tilde H/\pi_1(\H)$ follows
from the spectral sequence (below $\sigma: \tilde \H/\ZZ \rightarrow \H$) 
\begin{equation}
H^p(\pi_1(\H)/\ZZ,H^q(\sigma^*(\L)) \rightarrow H^{p+q}(\H,\L)
\end{equation}
since the group $\pi_1(\H)/\ZZ$ is finite.
\end{proof}

\begin{remark} One can obtain the 
conclusion of Proposition \ref{homologylocalsystems}
for homological Hopf manifolds, if one makes additional assumptions
on the fundamental group e.g. for $\H$ such that $\pi_1(\H)$ satisfies 
the conclusions of Proposition \ref{propfundgroups} (i) or more generally
on characteristic varieties of the group $\pi_1(\H)$ (cf. \cite{libgober}). 
\end{remark}

\section{Cohomology of vector bundles}\label{cohovectbun}

Let $X$ be a complex space on which a group $G$ acts via holomorphic maps 
freely and let $\pi: X \rightarrow X/G$. Let 
$\F$ be a coherent sheaf on $X/G$. Let $\bullet ^G: V \rightarrow V^G$
be the functor assigning to a $\CC$-vector space with a $G$-action 
the subspace of invariants. One has canonical isomorphism (cf \cite{Groth}): 
$\Gamma(X,\pi^*(\F))^G=
\Gamma(X/G,\F)$. The corresponding spectral sequence of the composition 
of functors is given by (cf. \cite{Groth}): 

\begin{equation}\label{spectralsequence}
  E_2^{p,q}=H^p(G,H^q(X,\pi^*\F)) \Rightarrow H^{p+q}(X/G,\F)
\end{equation}

In particular this can be applied to the case when 
$X$ be a complex manifold and $\pi: \tilde X \rightarrow X$ is its universal 
covering space. Let $\F$ be a coherent sheaf on $X$. 
The above spectral sequence becomes:

\begin{equation}
  E_2^{p,q}= H^p(\ZZ,H^q(\tilde X,\pi^*\F)) \Rightarrow H^{p+q}(X,\F)
\end{equation}

In the case when $G=\ZZ$  and hence has the cohomological dimension equal 
to $1$, 
this spectral sequence degenerates in term $E_2$ (i.e. due 
to the vanishing $H^i(\ZZ,V)=0, i \ge 2 $ for a $\ZZ$-module $V$).
Since $E_2^{0,q}=H^q(X,\pi^*(\F))^{\ZZ}=\Ker 
(T-Id: H^q(X,\pi^*{\F}) \rightarrow H^q(X,\pi^*{\F})$ and 
$E_2^{1,q}=\Coker T-Id: H^q(X,\pi^*{\F}) \rightarrow H^q(X,\pi^*{\F})$ 

Hence we have the exact sequence: 

\begin{equation}\label{milnor}
\rightarrow H^{q-1}(X,\pi^*(\F)) \buildrel T-1 \over \rightarrow 
 H^{q-1}(X,\pi^*(\F)) \rightarrow H^q(X/G,\F) 
\end{equation}
$$\rightarrow
H^q(X,\pi^*(\F)) \buildrel T-1 \over \rightarrow 
 H^{q}(X,\pi^*(\F))$$

Note that we have $H^i(\tilde X,\O)=0$ for $i \ne 0,n-1,n$, 
if $\tilde X=\CC^n-0$

{\it Proof of the theorem \ref{cohomologyresults}}. The finiteness 
assumption and the exact sequence (for a sheaf $\F$ on $\bar V$):
\begin{equation}
  H^q(\bar V) \rightarrow H^q(V,\F) \rightarrow H^{q+1}_A(\bar V,\F)
\end{equation}
yields that $H^q(V,p^*(E))$ is finite dimensional 
for $q \le r$ and is infinite dimensional for $q=0$.
Moreover the space $H^0(V,p^*(E))$ is Montel  with 
the semi-norm given by the compact subsets $\bigcup_i ( f^i(U) \cup f^{-i}(U))$
where $U$ is the closure of the fundamental domain of an infinite
order transformation $f$ on $V$ in the center of $\pi_1(H)$. In particular 
$f^*$ is compact and hence index of $I-f^*$ acting on $H^0(V,p^*(E))$
is zero and hence has equal dimensions of kernel and cokernel. 
This and the exact sequence relating the cohomology of cover discussed above
yield the claim.
\qed

{\it Proof of the theorem \ref{vanishing}.}
We have $H^q(V-O,\Omega^p_{V-O} \otimes \pi^*(E))=0$ 
unless $q=0,n-1-p,n-p,n-1, p \ge 2$.
In the case $q=0,n-1$ spaces are infinite dimensional 
but the operators on $H^q$ induced by $T$ are Fredholm with 
zero index. 
Hence the exact sequence (\ref{milnor}) yields:

\begin{equation}
  {\rm dim} H^q(V-0/G,\Omega^p(E))={\rm dim} 
H^{q+1}(V-0/G,\Omega^p(E)) \ \ \ q=0,n-1
\end{equation}
$$H^q(V-0/G,\Omega^p(E))=0 \ \ \ q \ne n-1-p,n-p, 0,1,n-1,n $$
$${\rm dim} H^{n-2-p}(V-0/G,\Omega^p(E))=\tau, $$
$${\rm dim} H^{n-1-p}(V-0/G,\Omega^p(E))= \tau \ \ $$
 $${\rm dim} H^{n-p}(V-0/G,\Omega^p(E))=\tau \ \ p \ne 0,1$$

Indeed, for $q=0,1,n-1,n$ are cohomology of bundles on Hopf manifold
are kernel and cokernel of Fredholm operators which, as
was mentioned earlier, has zero index.
For $q \ne 0,1,n-1,n$ contribution from $H^q(V-0,\Omega^p\otimes \pi^*(E))$
contributes $\tau$ into ${\rm dim}H^q$ and ${\rm dim} H^{q+1}$ 
since the action of the covering group on the cohomology 
$H^{n-2-p}(V-0,\Omega^p(E))$ is trivial 
and 
remaining groups are zeros. 
Now the result follows from lemma \ref{maxideallemma}.

\begin{corollary} Let $\H$ be a homological Hopf manifold 
as in \ref{vanishing} 
Then $Pic(\H)=\CC^*$.
\end{corollary}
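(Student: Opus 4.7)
I would use the exponential sheaf sequence $0\to\ZZ\to\O_{\H}\to\O_{\H}^{*}\to 0$ and read off $\mathrm{Pic}(\H)=H^{1}(\H,\O_{\H}^{*})$ from the resulting long exact sequence. The necessary inputs are (i) the coherent groups $H^{1}(\H,\O)$ and $H^{2}(\H,\O)$, (ii) the topological groups $H^{1}(\H,\ZZ)$ and $H^{2}(\H,\ZZ)$, and (iii) the non-triviality of the connecting map.

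For (i), apply Theorem \ref{cohomologyresults} to the trivial bundle $E=\O_{\H}$. Corollary \ref{cohomologyoncover} gives $H^{q}(V-0,\O)=0$ for $1\le q\le n-1$, since the structure sheaf extends to the depth-$n$ Cohen--Macaulay sheaf $\O_{V}$ and $n\ge 3$. On the Montel space $H^{0}(V-0,\O)=H^{0}(V,\O)$ the operator $f_{V}^{0}-I$ is Fredholm of index zero with one-dimensional kernel: its invariants are precisely the constants, by the same argument as in Lemma \ref{maxideallemma} applied to $p=q=0$, since all eigenvalues of the linearisation of $T$ at the origin have modulus strictly less than one and thus every non-constant monomial in a $T$-invariant Taylor series is killed. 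The formula $\dim H^{k}(\H,\O)=\dim\Ker f_{V}^{k}+\dim\Coker f_{V}^{k-1}$ therefore yields $H^{1}(\H,\O)=\CC$ and $H^{2}(\H,\O)=0$. For (ii), since $\H$ is an integral homological Hopf manifold, $H^{1}(\H,\ZZ)=\ZZ$ and $H^{2}(\H,\ZZ)=0$ (using $n\ge 3$). The exponential long exact sequence collapses to
\[
0 \longrightarrow \ZZ \stackrel{\alpha}{\longrightarrow} \CC \longrightarrow \mathrm{Pic}(\H) \longrightarrow 0.
\]

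The remaining and main technical point is to check that $\alpha$ is non-zero, for then $\alpha(\ZZ)$ is a rank-one lattice in $\CC$ and $\mathrm{Pic}(\H)\cong\CC/\alpha(\ZZ)\cong\CC^{*}$ via the exponential. One would factor $\alpha$ as the composition $H^{1}(\H,\ZZ)\hookrightarrow H^{1}(\H,\CC)\to H^{1}(\H,\O)$; the first arrow is the standard lattice embedding (non-zero since $b_{1}=1$), so it suffices to show the second is surjective. Both source and target are one-dimensional, so this reduces to the vanishing $H^{0}(\H,\Omega^{1})=0$. To see this, a holomorphic 1-form on $\H$ pulls back to a $T$-invariant holomorphic 1-form on $V-0$ which extends to $V$ by Hartogs; applying $T^{*}$ and comparing Taylor coefficients at the origin forces the form to vanish identically, because the eigenvalues of $T$ at the origin all have modulus strictly less than one. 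This is the expected degeneration of the first row of the Fr\"olicher spectral sequence and concludes the argument.
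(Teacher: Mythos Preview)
Your proof is correct and follows the same exponential-sequence approach as the paper, which simply cites Theorem~\ref{vanishing} for $H^0(\H,\O)=H^1(\H,\O)=\CC$, writes out the segment $H^1(\H,\ZZ)\to\CC\to H^1(\H,\O^*)\to H^2(\H,\ZZ)$, and concludes using $b_2=0$.

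Two remarks on the differences. First, your careful verification of the injectivity of $\alpha:H^1(\H,\ZZ)\to H^1(\H,\O)$ via $H^0(\H,\Omega^1)=0$ is valid but more work than necessary: injectivity already follows from the portion of the long exact sequence preceding $\alpha$, since $H^0(\H,\O)\to H^0(\H,\O^*)$ is just $\exp:\CC\to\CC^*$, which is surjective, so the connecting map $H^0(\H,\O^*)\to H^1(\H,\ZZ)$ vanishes. Second, you assert $H^2(\H,\ZZ)=0$ by declaring $\H$ an \emph{integral} homological Hopf manifold; the paper instead only invokes $H^2(\H,\QQ)=0$. Neither argument is fully explicit about possible torsion in $H^2(\H,\ZZ)$ in the generality of Theorem~\ref{vanishing} (non-primary Hopf manifolds, or links that are only $\QQ$-spheres), but your hypothesis is at least what is actually needed for the stated conclusion $\mathrm{Pic}(\H)=\CC^*$.
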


\begin{proof} We have $H^0(\H,\O)=H^1(\H,\O)=\CC$ from theorem 
\ref{vanishing}.
Hence the cohomology sequence of the exponential sequence 
$0 \rightarrow \ZZ \rightarrow \O \rightarrow \O^* \rightarrow 0$
has the form $H^1(\H,\ZZ) \rightarrow \CC \rightarrow H^1(\H,\O^*) \rightarrow 
H^2(\H,\ZZ)$. Since $H^2(\H,\QQ)=0$,
this yields the claim.
\end{proof}

\section{Hodge deRham spectral sequence}\label{hodgederham}

\bigskip

Recall that the category of local systems on 
a manifold is equivalent to the category 
of locally constant sheaves which in turn is 
equivalent to the category of locally free sheaves 
with integrable connection (cf. \cite{deligne}).
For a local system, $\L$, let 
$E_{\L}$ be the corresponding locally free 
sheaf and and $\nabla_{\L}$ be the corresponding flat connection 
on $E_{\L}$. 

\begin{prop} Let $\H$ be a Hopf manifold, $E$ be a 
bundle on $\H$ with a trivial pullback on the universal cover and 
$\L$ be the corresponding local system. Then the 
Hodge deRham spectral sequence:
\begin{equation}
H^q(\H,\Omega^p(E)) \rightarrow H^{p+q}(\H,\L)
\end{equation} 
degenerates in the term $E_2$.
\end{prop}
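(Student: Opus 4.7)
The plan is to prove degeneration at $E_2$ by a dimension count: since $\dim E_r$ is nonincreasing in $r$ and $\dim E_\infty = \sum_k \dim H^k(\H, \L)$, it suffices to show that $\dim E_2$ equals this total. By Proposition~\ref{homologylocalsystems} applied to the corresponding local system $\L$, this number is $4$, so the target is $\dim E_2(\H) = 4$.

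First, by Theorem~\ref{vanishing}(a), the $E_1$ page $E_1^{p,q}(\H) = H^q(\H, \Omega^p(E))$ is nonzero only for $q \in \{0, 1, n-1, n\}$, with dimensional equalities between rows $0$ and $1$ and between rows $n-1$ and $n$. The long exact sequence~(\ref{milnor}), combined with the vanishing $H^q(\tilde \H, \pi^*\Omega^p(E)) = 0$ for $q \notin \{0, n-1\}$ (Corollary~\ref{cohomologyoncover}, valid because $\pi^*E$ is trivial), identifies rows $q = 0, 1$ of $E_1(\H)$ respectively with the $G$-invariant and $G$-coinvariant subcomplexes of $V^\bullet := H^0(\tilde \H, \pi^*\Omega^\bullet(E))$, and rows $q = n-1, n$ with those of $W^\bullet := H^{n-1}(\tilde \H, \pi^*\Omega^\bullet(E))$, $d_1$ being induced from the de Rham differential upstairs. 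Here $G = \ZZ$ is the central cyclic subgroup of $\pi_1(\H)$; the passage from $\tilde \H / G$ to $\H$ by the finite quotient $\pi_1(\H)/G$ is absorbed as in the proof of Proposition~\ref{homologylocalsystems}.

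Next, the Hodge--de Rham spectral sequence on $\tilde \H = \CC^n - 0$ itself has $E_1$ concentrated in rows $q = 0, n-1$ and converges to $H^*(\tilde \H, \pi^*\L) = H^*(S^{2n-1}, \CC^r)$. By Hartogs, row $q = 0$ is the holomorphic de Rham complex of $\CC^n$, with cohomology $\CC^r$ in degree $0$ and zero elsewhere; by the local-cohomology/residue description, row $q = n-1$ is dual and has cohomology $\CC^r$ concentrated at $p = n$. The resulting $E_2$ page already matches the abutment, so the spectral sequence on $\tilde \H$ degenerates at $E_2$, giving $H^*(V^\bullet) = \CC^r$ at $p = 0$ and $H^*(W^\bullet) = \CC^r$ at $p = n$, zero otherwise.

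Finally, for any bounded $\ZZ$-complex $C^\bullet$ with finite-dimensional cohomology, the two spectral sequences of the double complex $[\,C^\bullet \xrightarrow{T - I} C^\bullet\,]$ both automatically collapse (since the cohomological dimension of $\ZZ$ is $1$), yielding
$$\sum_p \dim H^p(C^{G, \bullet}) + \sum_p \dim H^p(C^\bullet_G) \;=\; \dim H^*(C^\bullet)^G + \dim H^*(C^\bullet)_G.$$
Applied to $V^\bullet$, the right-hand side is $2\dim (\CC^r)^G$, and the holomorphic de Rham isomorphism $H^0(V^\bullet) \cong H^0(\tilde \H, \pi^*\L)$ intertwines the $G$-action with the monodromy of $\L$, so Cartan--Leray and Proposition~\ref{homologylocalsystems} give $\dim (\CC^r)^G = \dim H^0(\H, \L) = 1$; thus rows $q = 0, 1$ contribute $2$ to $\dim E_2(\H)$. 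The identical argument for $W^\bullet$ contributes another $2$ from rows $q = n-1, n$, totaling $\dim E_2(\H) = 4 = \dim H^*(\H, \L)$, which is the desired degeneration. The principal obstacle will be this final step: verifying that the $G$-action on $H^*(V^\bullet)$ and $H^*(W^\bullet)$ is identified with the monodromy of $\L$ precisely enough, through the holomorphic de Rham isomorphisms on $\tilde \H$, so that the invariant and coinvariant dimensions produced by the double-complex argument truly reproduce the Cartan--Leray counts underlying Proposition~\ref{homologylocalsystems}.
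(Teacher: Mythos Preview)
Your dimension-count strategy is sound in outline, but the double-complex lemma you invoke at the end is false, and this is exactly the step that carries the whole argument. You assert that for a bounded $\ZZ$-complex $C^\bullet$ with finite-dimensional cohomology, both spectral sequences of the double complex $[\,C^\bullet \xrightarrow{T-I} C^\bullet\,]$ collapse at $E_2$ ``since the cohomological dimension of $\ZZ$ is $1$''. But cohomological dimension $1$ only forces the spectral sequence that takes de~Rham cohomology \emph{first} and group cohomology \emph{second} to degenerate at $E_2$ (it has two columns). The other spectral sequence---the one whose $E_2$ is $H^*((C^\bullet)^G)\oplus H^*((C^\bullet)_G)$, which is precisely the quantity you need---has two \emph{rows}, and a two-row spectral sequence can carry a nonzero $d_2$. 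A small counterexample: take $C^0=C^1=C^2=\CC^2$ with $T=\begin{pmatrix}1&1\\0&1\end{pmatrix}$ everywhere and $d=\begin{pmatrix}0&1\\0&0\end{pmatrix}$; then $\dim H^*(C^\bullet)^G+\dim H^*(C^\bullet)_G=4$ while $\dim H^*((C^\bullet)^G)+\dim H^*((C^\bullet)_G)=6$, and indeed $d_2$ is nonzero. So in general your identity is only an inequality
\[
\sum_p \dim H^p((C^\bullet)^G)+\sum_p \dim H^p((C^\bullet)_G)\ \ge\ \dim H^*(C^\bullet)^G+\dim H^*(C^\bullet)_G,
\]
and this is the wrong direction: it gives $\dim E_2(\H)\ge 4$, which you already knew from $E_2\ge E_\infty$.

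What is actually needed is a direct proof that $(V^\bullet)^G$ (and $(V^\bullet)_G$) is exact in positive degrees, i.e.\ that taking $T$-invariants commutes with the Poincar\'e lemma on $\CC^n$. This is exactly what the paper does: it observes that the standard radial homotopy operator producing $\eta$ with $d\eta=\omega$ is itself $T$-equivariant, so closed invariant forms have invariant primitives. That single observation replaces your double-complex step and is the genuine content of the proof; without it (or an equivalent statement ruling out Jordan-block behaviour of $T$ on $V^\bullet$), the dimension count cannot close.
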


\begin{proof} In order to calculate the differential $d_1$ 
i.e. the map $E_1^{p,q} \rightarrow E_1^{p,q}$ for $q=0,1$
recall that the terms $E_1^{p,0}$ and $E_1^{p,1}$ are 
respectively invariants and covariants of the selfmap 
of $\Gamma(\CC^n,\Omega^p \otimes p^*(E))$ induced by a map $T: \CC^n 
\rightarrow \CC^n$ (such that $\CC^n-O/t=h$. On the 
other hand the holomorphic deRham complex on $\CC^n$:
\begin{equation}
 \Gamma(\CC^n,\Omega^p) \buildrel d \over \rightarrow 
\Gamma(\CC^n,\Omega^{p+1})
\end{equation}
is exact. The explicit construction of the holomorphic form $\eta$ such that 
for a given close form $\omega$ one has $\omega=d\eta$ shows that $\eta$ 
is $T$-invariant if $\omega$ is. Hence $d$ induced exact sequence 
of $T$-invariant forms. i.e. the term $E_2$ is zero
(in particular one can recover the vanishing results of
Prop. \ref{homologylocalsystems}).
\end{proof}

\end{document}